 \newtheoremstyle{mytheorem}
 {3pt}
 {3pt}
 {\slshape}
 {}
 {\bfseries}
 {.}
 { }
 {}
\numberwithin{equation}{section}
\theoremstyle{theorem}
\newtheorem{theorem}{Theorem}[section]
\newtheorem*{theorem*}{Theorem}
\newtheorem{corollary}[theorem]{Corollary}
\newtheorem{lemma}[theorem]{Lemma}
\newtheorem{proposition}[theorem]{Proposition}
\providecommand{\customgenericname}{}
\newcommand{\newcustomtheorem}[2]{%
	\newenvironment{#1}[1]
	{%
		\renewcommand\customgenericname{#2}%
		\renewcommand\theinnercustomgeneric{##1}%
		\innercustomgeneric
	}
	{\endinnercustomgeneric}
}
\theoremstyle{definition}
\newtheorem{definition}{Definition}[section]
\newtheorem{example}{Example}[section]
\newtheorem*{example*}{Example}
\newtheorem*{examples*}{Examples}
\newtheorem{remark}{Remark}[section]
\newtheorem*{remark*}{Remark}
\newtheorem*{remarks*}{Remarks}
\newtheoremstyle{named}{}{}{\itshape}{}{\bfseries}{.}{.5em}{#1\thmnote{ #3}}
\theoremstyle{named}
\newcommand{\Keywords}[1]{\ifthenelse{\isempty{#1}}{}{\smallskip \smallskip \noindent \textbf{Keywords}. #1}}
\newcommand{\MSC}[2][2020]{\ifthenelse{\isempty{#2}}{}{\smallskip \smallskip \noindent \textbf{#1MSC}. #2}}
\newcommand{\abstractnote}[1]{\ifthenelse{\isempty{#1}}{}{\smallskip \smallskip \noindent \textsuperscript{\dag}#1}}
\def\specialsection{\@startsection{section}{1}%
  \z@{\linespacing\@plus\linespacing}{.5\linespacing}%
  {\normalfont}}
\def\section{\@startsection{section}{1}%
  \z@{.7\linespacing\@plus\linespacing}{.5\linespacing}%
  {\normalfont\scshape}}
\patchcmd{\@settitle}{\uppercasenonmath\@title}{\Large\boldmath}{}{}
\patchcmd{\@settitle}{\begin{center}}{\begin{flushleft}}{}{}
\patchcmd{\@settitle}{\end{center}}{\end{flushleft}}{}{}
\patchcmd{\@setauthors}{\MakeUppercase}{\normalsize}{}{}
\patchcmd{\@setauthors}{\centering}{\raggedright}{}{}
\patchcmd{\section}{\scshape}{\large\bfseries\boldmath}{}{}
\patchcmd{\subsection}{\bfseries}{\bfseries\boldmath}{}{}
\renewcommand{\@secnumfont}{\bfseries}
\patchcmd{\@startsection}{\@afterindenttrue}{\@afterindentfalse}{}{}
\patchcmd{\abstract}{\leftmargin3pc}{\leftmargin1pc}{}{}
\def\maketitle{\par
  \@topnum\z@ 
  \@setcopyright
  \thispagestyle{empty}
  \ifx\@empty\shortauthors \let\shortauthors\shorttitle
  \else \andify\shortauthors
  \fi
  \@maketitle@hook
  \begingroup
  \@maketitle
  \toks@\@xp{\shortauthors}\@temptokena\@xp{\shorttitle}%
  \toks4{\def\\{ \ignorespaces}}
  \edef\@tempa{%
    \@nx\markboth{\the\toks4
      \@nx\MakeUppercase{\the\toks@}}{\the\@temptokena}}%
  \@tempa
  \endgroup
  \c@footnote\z@
  \@cleartopmattertags
}
\newcommand{\bB}{\mathbf{B}}
\newcommand{\cB}{\mathcal{B}}
\newcommand{\cS}{\mathcal{S}}
\newcommand{\cL}{\mathcal{L}}
\newcommand{\sI}{\mathscr{I}}
\newcommand{\fs}{\mathsf{s}}
\newcommand{\qbinom}[2]{\begin{bmatrix}#1\\#2\end{bmatrix}}
\title{Linked partition ideals and Euclidean billiard partitions}
\author[S. Chern]{Shane Chern}
\address{Department of Mathematics and Statistics, Dalhousie University, Halifax, Nova Scotia, B3H 4R2, Canada}
\email{chenxiaohang92@gmail.com}
\date{}
\begin{document}

\maketitle

\begin{abstract}

Euclidean billiard partitions were recently introduced by Andrews, Dragovi{\'c} and Radnovi{\'c} in their study of periodic trajectories of ellipsoidal billiards in the Euclidean space. They are integer partitions into distinct parts such that (E1) adjacent parts are never both odd; (E2) the smallest part is even. By refining the framework of linked partition ideals, we establish a couple of relevant trivariate generating function identities, from which the result of Andrews, Dragovi{\'c} and Radnovi{\'c} follows as an immediate consequence.

\Keywords{Linked partition ideals, Euclidean billiard partitions, generating functions, $q$-difference equations, adjacent parts.}

\MSC{05A15, 05A17, 11P84.}
\end{abstract}

\section{Introduction}

\emph{Euclidean billiard partitions} arose from the work of Dragovi{\'c} and Radnovi{\'c} \cite{DR2015} in which periodic trajectories of ellipsoidal billiards in the Euclidean space were studied. Briefly speaking, such partitions are constructed by the period of a periodic trajectory as the largest part, followed by a sequence of winding numbers as the remaining parts. Recently, Andrews, Dragovi{\'c} and Radnovi{\'c} \cite{ADR2020} further translated this geometric definition to the language of partition theory.

\begin{definition}\label{def:EE12}
	\emph{Euclidean billiard partitions} are integer partitions into distinct parts such that
	\begin{itemize}[align=left,
		leftmargin=2.5em,
		itemindent=0pt,
		labelsep=0pt,
		labelwidth=2.5em]
		\item[(E1)] adjacent parts are never both odd;
		\item[(E2)] the smallest part is even.
	\end{itemize}
	We denote by $\cB$ the set of Euclidean billiard partitions.
\end{definition}

Given any integer partition $\lambda$, let us adopt the convention that $|\lambda|$ and $\sharp(\lambda)$ denote the sum of all parts (namely, the \emph{size}) and the number of parts (namely, the \emph{length}) in $\lambda$, respectively. Also, we denote by $\sharp_{a,M}(\lambda)$ the number of parts in $\lambda$ that are congruent to $a$ modulo $M$.

In order to enumerate the number of possible choices of types of caustics for each billiard trajectory, Andrews, Dragovi{\'c} and Radnovi{\'c} assigned a weight $\varphi$ to each Euclidean billiard partition $\lambda$ as follows:
\begin{align*}
	\varphi(\lambda):=\begin{cases}
		\sharp(\lambda)-2\sharp_{1,2}(\lambda)-1 & \text{if the largest part in $\lambda$ is even},\\
		\sharp(\lambda)-2\sharp_{1,2}(\lambda) & \text{if the largest part in $\lambda$ is odd}.
	\end{cases}
\end{align*}
The main result in \cite{ADR2020} is the following bivariate generating function identity for Euclidean billiard partitions.

\begin{theorem}[Andrews--Dragovi{\'c}--Radnovi{\'c} {\cite[Theorem 2.8]{ADR2020}}]\label{th:ADR}
	We have
	\begin{align}\label{eq:ADR}
		1+\sum_{\lambda\in \cB} x^{\varphi(\lambda)}q^{|\lambda|} = 1+\sum_{d\ge 1}\sum_{m\ge 0}\frac{s(d,m)}{(q^2;q^2)_d}
	\end{align}
	where
	\begin{align*}
		s(d,m)=\begin{cases}
			x^{2n-d}q^{d^2+2n^2-2dn+3n}\qbinom{n-1}{2n-d}_{q^2} & \text{if $m=2n+1$},\\[20pt]
			x^{2n-d-1}q^{d^2+2n^2-2dn+2d-n}\qbinom{n-1}{2n-d-1}_{q^2} & \text{if $m=2n$},
		\end{cases}
	\end{align*}
	with the $q$-Pochhammer symbol defined for $n\in\mathbb{N}\cup\{\infty\}$ by
	\begin{align*}
		(A;q)_n:=\prod_{k=0}^{n-1}(1-A q^k)
	\end{align*}
	and the $q$-binomial coefficients defined by
	\begin{align*}
		\qbinom{A}{B}_q:=\begin{cases}
			\dfrac{(q;q)_A}{(q;q)_B(q;q)_{A-B}} & \text{if $0\le B\le A$},\\[12pt]
			0 & \text{otherwise}.
		\end{cases}
	\end{align*}
\end{theorem}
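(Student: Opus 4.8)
The plan is to peel from each Euclidean billiard partition a canonical ``minimal'' partition carrying the parity pattern of its parts; this converts the left-hand side of \eqref{eq:ADR} into a generating function over binary strings, which is then evaluated case by case by means of $q$-binomial complementation. Throughout, $[\,\cdot\,]$ denotes the Iverson bracket. \textbf{Step 1 (staircase decomposition).} Fix $\ell\ge 1$, and let $\lambda=(\lambda_1>\cdots>\lambda_\ell)\in\sD_{OO\times}^E$ have parity pattern $\epsilon=(\epsilon_1,\dots,\epsilon_\ell)\in\{0,1\}^\ell$, where $\epsilon_i\equiv\lambda_i\pmod 2$. By (E1) no two consecutive entries of $\epsilon$ equal $1$, and by (E2) $\epsilon_\ell=0$; call such $\epsilon$ \emph{admissible} and write $s_\epsilon$ for its number of $1$'s. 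To an admissible $\epsilon$ attach the componentwise smallest partition $\mu^{(\epsilon)}=(\mu^{(\epsilon)}_1>\cdots>\mu^{(\epsilon)}_\ell)$ into $\ell$ distinct positive parts with parity pattern $\epsilon$; concretely $\mu^{(\epsilon)}_\ell=2$ and $\mu^{(\epsilon)}_i-\mu^{(\epsilon)}_{i+1}=1+[\epsilon_i=\epsilon_{i+1}]$. Since $\lambda_i-\lambda_{i+1}$ is a positive integer congruent modulo $2$ to $\mu^{(\epsilon)}_i-\mu^{(\epsilon)}_{i+1}$, it is at least $\mu^{(\epsilon)}_i-\mu^{(\epsilon)}_{i+1}$; together with $\lambda_\ell\ge 2$ this shows that $\nu:=\lambda-\mu^{(\epsilon)}$ (taken componentwise) is a partition into at most $\ell$ nonnegative \emph{even} parts, and $\lambda\mapsto(\epsilon,\nu)$ is a bijection from the set of Euclidean billiard partitions of length $\ell$ onto the set of pairs (admissible $\epsilon\in\{0,1\}^\ell$, partition into at most $\ell$ even parts). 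As $|\lambda|=|\mu^{(\epsilon)}|+|\nu|$, $\sharp(\lambda)=\ell$, and $\phi(\lambda)=\ell-2s_\epsilon-[\epsilon_1=0]=:\phi(\epsilon)$ depends only on $\epsilon$, summing off $\nu$ (and using $\sum_\nu q^{|\nu|}=1/(q^2;q^2)_\ell$) yields
\begin{align*}
1+\sum_{\lambda\in\sD_{OO\times}^E}x^{\phi(\lambda)}q^{|\lambda|}=1+\sum_{\ell\ge 1}\frac{1}{(q^2;q^2)_\ell}\sum_{\epsilon}x^{\phi(\epsilon)}q^{|\mu^{(\epsilon)}|},
\end{align*}
the inner sum over admissible $\epsilon\in\{0,1\}^\ell$. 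It therefore suffices to show, for each fixed $\ell=d$, that $\sum_{\epsilon}x^{\phi(\epsilon)}q^{|\mu^{(\epsilon)}|}=\sum_{m\ge 0}s(d,m)$.

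\textbf{Step 2 (the size of $\mu^{(\epsilon)}$).} Writing $\mu^{(\epsilon)}_i=2+\sum_{j=i}^{\ell-1}(1+[\epsilon_j=\epsilon_{j+1}])$ and summing over $i$ gives $|\mu^{(\epsilon)}|=2\ell+\binom{\ell}{2}+\sum_{j:\,\epsilon_j=\epsilon_{j+1}}j$. If the $1$'s of $\epsilon$ sit at positions $p_1<\cdots<p_{s_\epsilon}$ (so $p_{k+1}\ge p_k+2$ and $p_{s_\epsilon}\le\ell-1$), then $\{j\in[1,\ell-1]:\epsilon_j\ne\epsilon_{j+1}\}$ is the disjoint union of the sets $\{p_k-1,p_k\}\cap[1,\ell-1]$, each of which has element-sum $2p_k-1$; hence
\begin{align*}
|\mu^{(\epsilon)}|=\ell^2+\ell+s_\epsilon-2\sum_{k}p_k,\qquad \mu^{(\epsilon)}_1=2(\ell-s_\epsilon)+\epsilon_1.
\end{align*}

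\textbf{Step 3 (evaluating the string-sum).} Fix $\ell$ and group the admissible $\epsilon$ according to $\epsilon_1$ and to $s:=s_\epsilon$; set $n:=\ell-s$, so that $\phi(\epsilon)=2n-\ell-[\epsilon_1=0]$. If $\epsilon_1=0$ then $2\le p_1<\cdots<p_s\le\ell-1$; the substitution $p_k=2k+w_k$ identifies $(w_1\le\cdots\le w_s)$ with an arbitrary partition inside an $s\times(\ell-1-2s)$ box, and since $\sum_k p_k=s(s+1)+\sum_k w_k$, the complementation identity $\sum q^{-2(\text{partition size})}=q^{-2(\text{box area})}\sum q^{2(\text{partition size})}$ gives $\sum_p q^{-2\sum_k p_k}=q^{2s^2-2s\ell}\qbinom{\ell-1-s}{s}_{q^2}$. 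Multiplying by $q^{\ell^2+\ell+s}$ (Step 2) and by $x^{2n-\ell-1}$, and rewriting $\qbinom{\ell-1-s}{s}_{q^2}=\qbinom{n-1}{2n-\ell-1}_{q^2}$, turns this into $s(\ell,2n)$. When $\epsilon_1=1$ the same computation applies with $p_1=1$ forced, so only $p_2<\cdots<p_s$ vary (inside an $(s-1)\times(\ell-2s)$ box), and it produces $s(\ell,2n+1)$. Finally, as $s$ runs over its admissible values for each parity of $\epsilon_1$, the integer $m=2n+\epsilon_1\,(=\mu^{(\epsilon)}_1)$ runs over exactly those $m\ge 0$ for which the $q$-binomial occurring in $s(\ell,m)$ is nonzero, and $s(\ell,m)=0$ for all other $m$. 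Summing over $s$ and both parities gives $\sum_{\epsilon}x^{\phi(\epsilon)}q^{|\mu^{(\epsilon)}|}=\sum_{m\ge 0}s(\ell,m)$, which together with Step 1 is exactly \eqref{eq:ADR}.

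The bijection of Step 1 and the telescoping computation of Step 2 are routine. The main obstacle is Step 3: picking the correct substitution $p_k\mapsto w_k$ in each of the two parity regimes (with the additional constraint $p_1=1$ when $\epsilon_1=1$), invoking $q$-binomial complementation to convert the negative-exponent sum $\sum_p q^{-2\sum_k p_k}$ into the claimed positive power of $q$ times a Gaussian polynomial, and then verifying — via $\qbinom{a}{b}_{q^2}=\qbinom{a}{a-b}_{q^2}$ — that both the resulting $q$-exponent and the binomial coefficient coincide with the closed forms defining $s(d,2n)$ and $s(d,2n+1)$.
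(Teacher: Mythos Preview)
Your argument is correct. The staircase decomposition of Step~1 is precisely the ``separable integer partition class'' viewpoint: each $\lambda\in\sD_{OO\times}^E$ of length $\ell$ factors uniquely as a minimal partition $\mu^{(\epsilon)}$ (determined by the parity word $\epsilon$) plus an even partition with at most $\ell$ parts, and this yields the $1/(q^2;q^2)_\ell$. The bookkeeping in Steps~2--3 checks out, including the slightly delicate point that the element-sum of $\{p_k-1,p_k\}\cap[1,\ell-1]$ equals $2p_k-1$ even when $p_1=1$, so that the single formula $|\mu^{(\epsilon)}|=\ell^2+\ell+s_\epsilon-2\sum_k p_k$ covers both parities of $\epsilon_1$; and the box substitutions together with complementation indeed reproduce both $q$-exponents and both $q$-binomials in $s(d,2n)$ and $s(d,2n+1)$.

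Note, however, that the present paper does \emph{not} give its own proof of Theorem~\ref{th:ADR}: the result is quoted from Andrews--Dragovi\'c--Radnovi\'c \cite{ADR2020}, and the surrounding text merely records that their derivation used separable integer partition classes (the same philosophy you adopt). So there is no ``paper's proof'' to compare against here; your write-up is essentially a self-contained reconstruction of the ADR argument. The paper's own contributions are the identities in Theorem~\ref{th:EE}, obtained by an entirely different mechanism (linked partition ideals and a $q$-difference system), and the only contact it makes with \eqref{eq:ADR} is Corollary~\ref{th:rmk}, which equates the two double sums at $x=1$ analytically.
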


Looking at the geometric side, the weight $\varphi$ is of high significance. However, it is more natural to consider simply the enumerations of parts in certain arithmetic progressions (such as the number of odd parts appearing in the definition of $\varphi$) from a partition-theoretic perspective. For instance, one might be curious if there is an explicit expression for the generating function
$$B(x,y,q):=1+\sum_{\lambda\in \cB} x^{\sharp(\lambda)} y^{\sharp_{1,2}(\lambda)}q^{|\lambda|}.$$
Meanwhile, we may further separate the set of Euclidean billiard partitions into disjoint subsets according to the parity of the largest part.

\begin{definition}
	Let $\cB_E$ (resp.~$\cB_O$) denote the set of Euclidean billiard partitions with the largest part even (resp.~odd).
\end{definition}

As long as one can get nice expressions for the trivariate generating functions
\begin{align*}
	B_E(x,y,q)&:=\sum_{\lambda\in \cB_E} x^{\sharp(\lambda)} y^{\sharp_{1,2}(\lambda)}q^{|\lambda|},\\
	B_O(x,y,q)&:=\sum_{\lambda\in \cB_O} x^{\sharp(\lambda)} y^{\sharp_{1,2}(\lambda)}q^{|\lambda|},
\end{align*}
it is immediate that
\begin{align}\label{eq:ADR-alt}
	1+\sum_{\lambda\in \cB} x^{\varphi(\lambda)}q^{|\lambda|} = 1+ x^{-1}B_E(x,x^{-2},q)+B_O(x,x^{-2},q),
\end{align}
so the generating function identities for $B_E(x,y,q)$ and $B_O(x,y,q)$ as well as for $B(x,y,q)$ shall deliver more information.

Motivated by the above discussions, the first object of this paper concerns the following trivariate generating function identities.

\begin{theorem}\label{th:gf-EB}
	We have
	\begin{align}\label{eq:gf-EB}
		1+\sum_{\lambda\in \cB} x^{\sharp(\lambda)} y^{\sharp_{1,2}(\lambda)}q^{|\lambda|} = \sum_{i\ge 0}\sum_{j\ge 0}\frac{x^{i+j}y^jq^{i^2+j^2+i+2j}}{(q^2;q^2)_{i+j}}\qbinom{i}{j}_{q^2}.
	\end{align}
	Furthermore,
	\begin{align}\label{eq:gf-EB-E}
		\sum_{\lambda\in \cB_E} x^{\sharp(\lambda)} y^{\sharp_{1,2}(\lambda)}q^{|\lambda|} = \sum_{i\ge 1}\sum_{j\ge 0}\frac{x^{i+j}y^jq^{i^2+j^2+i+2j}}{(q^2;q^2)_{i+j}}\qbinom{i-1}{j}_{q^2}
	\end{align}
	and
	\begin{align}\label{eq:gf-EB-O}
		\sum_{\lambda\in \cB_O} x^{\sharp(\lambda)} y^{\sharp_{1,2}(\lambda)}q^{|\lambda|} = \sum_{i\ge 1}\sum_{j\ge 0}\frac{x^{i+j}y^jq^{i^2+j^2+3i}}{(q^2;q^2)_{i+j}}\qbinom{i-1}{j-1}_{q^2}.
	\end{align}
\end{theorem}

\begin{remark}
	Making the following change of variables
	\begin{align*}
		\left\{
		\begin{aligned}
			i&=n\\
			j&=d-n
		\end{aligned}
		\right. \qquad\iff\qquad
		\left\{
		\begin{aligned}
			d&=i+j\\
			n&=i
		\end{aligned}
		\right.
	\end{align*}
	in \eqref{eq:gf-EB-E} and \eqref{eq:gf-EB-O}, we obtain
	\begin{align*}
		x^{-1}B_E(x,x^{-2},q) = \sum_{d\ge 1}\sum_{n\ge 0}\frac{x^{2n-d-1}q^{d^2+2n^2-2dn+2d-n}}{(q^2;q^2)_d}\qbinom{n-1}{d-n}_{q^2}
	\end{align*}
	and
	\begin{align*}
		B_O(x,x^{-2},q) = \sum_{d\ge 1}\sum_{n\ge 0}\frac{x^{2n-d}q^{d^2+2n^2-2dn+3n}}{(q^2;q^2)_d}\qbinom{n-1}{d-n-1}_{q^2}.
	\end{align*}
	Recalling \eqref{eq:ADR-alt} immediately yields the result of Andrews, Dragovi{\'c} and Radnovi{\'c} in \eqref{eq:ADR}, while we shall also invoke a trivial property of the $q$-binomial coefficients \cite[p.~35, (3.3.2)]{And1976}:
	$$\qbinom{A}{B}_q = \qbinom{A}{A-B}_q.$$
\end{remark}

On the other hand, it is plain that Condition (E2) in Definition \ref{def:EE12} only constrains the parity of the smallest part in the partition in question. Therefore, Euclidean billiard partitions form a subset of distinct partitions that are only restricted by Condition (E1).

\begin{definition}\label{def:EE}
	We denote by $\cS$ the set of integer partitions into distinct parts such that
	\begin{itemize}[align=left,
		leftmargin=2.5em,
		itemindent=0pt,
		labelsep=0pt,
		labelwidth=2.5em]
		\item[(E1)] adjacent parts are never both odd.
	\end{itemize}
	Furthermore, let $\cS_E$ (resp.~$\cS_O$) denote the set of partitions in $\cS$ with the largest part even (resp.~odd).
\end{definition}

We have parallel results for these partitions.

\begin{theorem}\label{th:gf-S}
	We have
	\begin{align}\label{eq:gf-S}
		1+\sum_{\lambda\in \cS} x^{\sharp(\lambda)} y^{\sharp_{1,2}(\lambda)}q^{|\lambda|} = \sum_{i\ge 0}\sum_{j\ge 0}\frac{x^{i+j}y^jq^{i^2+j^2+i}}{(q^2;q^2)_{i+j}}\qbinom{i+1}{j}_{q^2}.
	\end{align}
	Furthermore,
	\begin{align}\label{eq:gf-S-E}
		\sum_{\lambda\in \cS_E} x^{\sharp(\lambda)} y^{\sharp_{1,2}(\lambda)}q^{|\lambda|} = \sum_{i\ge 1}\sum_{j\ge 0}\frac{x^{i+j}y^jq^{i^2+j^2+i}}{(q^2;q^2)_{i+j}}\qbinom{i}{j}_{q^2}
	\end{align}
	and
	\begin{align}\label{eq:gf-S-O}
		\sum_{\lambda\in \cS_O} x^{\sharp(\lambda)} y^{\sharp_{1,2}(\lambda)}q^{|\lambda|} = \sum_{i\ge 1}\sum_{j\ge 0}\frac{x^{i+j}y^{j+1}q^{i^2+j^2+i-1}}{(q^2;q^2)_{i+j}}\qbinom{i-1}{j}_{q^2}.
	\end{align}
\end{theorem}

To establish Theorem \ref{th:ADR}, Andrews, Dragovi{\'c} and Radnovi{\'c} utilized the technique of \emph{separable integer partition classes}, which was later systematically elaborated by Andrews in \cite{And2022}. However, our derivation of the above trivariate generating function identities relies on a different approach by absorbing ideas from \emph{linked partition ideals} introduced by Andrews \cite{And1972,And1974b,And1975} in the 1970s and reflourished by Chern and Li \cite{CL2018} quite recently. In particular, a major advantage of the framework of linked partition ideals is that it allows us to freely insert new parameters to count additional partition statistics that are closely related to the shape of the partitions in question.

In Sect.~\ref{sec:ref-lpi}, we refine the framework of linked partition ideals, which will be used in Sect.~\ref{sec:EBP} for the combinatorial constructions on Euclidean billiard partitions. We shall then derive a system of $q$-difference equations involving the desired generating functions. By solving this $q$-difference system with corresponding boundary conditions (i.e.~initial coefficients of the power series solutions), we prove Theorems \ref{th:gf-S} and \ref{th:gf-EB} in Sects.~\ref{sec:gf-S} and \ref{sec:gf-EB}, respectively. We close this paper with a conclusion in Sect.~\ref{sec:rmk}.

\section{Refined span one linked partition ideals}\label{sec:ref-lpi}

In the study of partitions constrained by conditions on the difference of neighboring parts such as partitions related to Schur-type identities \cite{ACL2021} or partitions arising from the Kanade--Russell conjectures \cite{CL2018}, a special type of linked partition ideals, known as \emph{span one linked partition ideals}, is of particular importance. However, to fit Euclidean billiard partitions into this framework, we have to first make a couple of refinements. It is necessary to point out in advance that this section only covers some generic definitions and the concrete example for Euclidean billiard partitions will be presented in the next section.

\begin{definition}[Refined span one linked partition ideals]
	Assume that we are given
	\begin{itemize}[leftmargin=*,align=left]
		\renewcommand{\labelitemi}{\scriptsize$\blacktriangleright$}
		
		\item a finite set $\Pi=\{\pi_1,\ldots,\pi_J,\pi_{J+1},\ldots,\pi_{J+K}\}$ of integer partitions where $\pi_1=\cdots=\pi_J=\varnothing$, the empty partition, while we deliberately assume that they are different, and $\pi_{J+1},\ldots,\pi_{J+K}$ are different nonempty partitions;
		
		\item a \textit{map of linking sets}, $\cL:\Pi\to P(\Pi)$, the power set of $\Pi$, such that for $1\le j\le J$, $\pi_j\in \cL(\pi_j)$ and $\pi_{j'}\not\in \cL(\pi_j)$ whenever $1\le j'\le J$ and $j'\ne j$, and that for $1\le k\le K$, there is \textbf{exactly} one $\pi_j$ with $1\le j\le J$ such that $\pi_j\in \cL(\pi_{J+k})$;
		
		\item and a positive integer $T$ called the \textit{modulus} such that it is no smaller than the largest part among all partitions in $\Pi$.
	\end{itemize}
	We say a \textit{refined span one linked partition ideal} $\sI=\sI(\langle\Pi,\cL\rangle,T)$ is the \textbf{multiset} of partitions of the form
	\begin{align}\label{eq:decomp}
		\lambda&=\phi^0(\lambda_0)\oplus \phi^T(\lambda_1)\oplus \cdots \oplus \phi^{NT}(\lambda_N)\oplus \phi^{(N+1)T}(\pi_j)\oplus \phi^{(N+2)T}(\pi_j)\oplus \cdots\notag\\
		&=\phi^0(\lambda_0)\oplus \phi^T(\lambda_1)\oplus \cdots \oplus \phi^{NT}(\lambda_N),
	\end{align}
	where all $\lambda_i$ are from $\Pi$ with $\lambda_i\in\cL(\lambda_{i-1})$ for each $i$, and $\lambda_N$ is not the empty partition while the empty partition $\pi_j$ with $1\le j\le J$ is such that $\pi_j\in \cL(\lambda_N)$. We also include $J$ copies of the empty partition in $\sI$, and they correspond to 
	\begin{align*}
		\varnothing&=\phi^{0}(\pi_1)\oplus \phi^{T}(\pi_1)\oplus \cdots\\
		&=\cdots\\
		&=\phi^{0}(\pi_J)\oplus \phi^{T}(\pi_J)\oplus \cdots.
	\end{align*}
	Here for any two partitions $\mu$ and $\nu$, $\mu\oplus\nu$ gives a partition by collecting all parts in $\mu$ and $\nu$, and $\phi^m(\mu)$ gives a partition by adding $m$ to each part of $\mu$.
\end{definition}

\begin{remark}
	The original definition of span one linked partition ideals (see, e.g.~\cite[Definition 2.1]{ACL2021}) corresponds to the $J=1$ case. However, we also slightly loosen the requirements for the map of linking sets for generality --- in the original definition, all nonempty partitions in $\Pi$ are assumed to be in $\cL(\pi_1)$, where $\pi_1$ is the only empty partition in $\Pi$ in the $J=1$ case.
\end{remark}

The underlying logic of (refined) span one linked partition ideals is that for every partition $\lambda\in \sI$, we may decompose the parts into blocks $\bB_0, \bB_1, \ldots$ such that all parts between $Ti+1$ and $Ti+T$ fall into the block $\bB_i$. Now applying the operator $\phi^{-Ti}$ to each block, we get a list of partitions with the largest part at most $T$, and they are all in $\Pi$.

We shall call the partition $\phi^{-Ti}(\bB_i)$ the \emph{prototype} of the block $\bB_i$.

If we denote by $\lambda_i$ the prototype of the block $\bB_i$, then we get a finite chain of partitions in $\Pi$, i.e.~$\lambda_0\to\lambda_1\to \cdots \to\lambda_N$ (where $\lambda_N$ corresponds to the last nonempty block), which can be further extended as an infinite chain ending with a series of empty partitions $\pi_j\to\pi_j\to\cdots$ such that $\pi_j\in \cL(\lambda_N)$ where $1\le j\le J$. In particular, the correspondence of
\begin{gather*}
	\lambda\\
	\Updownarrow\\
	\lambda_0\to\lambda_1\to \cdots \to\lambda_N\to \pi_j\to\pi_j\to\cdots
\end{gather*}
is connected by \eqref{eq:decomp}. By abuse of notation, we will not distinguish a partition $\lambda$ in $\sI$ and its linked partition ideal decomposition $\lambda_0\lambda_1\lambda_2\cdots$, which is short for the chain $\lambda_0\to\lambda_1\to \lambda_2\to\cdots$, and for convenience, we simply write $\lambda=\lambda_0\lambda_1\lambda_2\cdots$.

Recall that the refined span one linked partition ideal $\sI=\sI(\langle\Pi,\cL\rangle,T)$ is a multiset of partitions. For instance, the multiplicity of the empty partition $\varnothing$ is $J$ as we have
$$\varnothing = \pi_j\pi_j\cdots$$
for every $1\le j\le J$. Now a crucial question concerns the multiplicity of nonempty partitions in $\sI$.

\begin{lemma}\label{le:rlpi-multi}
	Let $\lambda$ be a nonempty partition in $\sI=\sI(\langle\Pi,\cL\rangle,T)$. Assume that $\bB_M$ is the first nonempty block with prototype $\lambda_M$.
	\begin{enumerate}[label={\textup{(\arabic*).~}},leftmargin=*,labelsep=0cm,align=left]
		\item If $M=0$, then the multiplicity of $\lambda$ in $\sI$ is $1$, and we have $\lambda = \lambda_0\lambda_1\cdots$.
		
		\item If $M>0$, and assume that there are $\alpha$ empty partitions among $\pi_1,\ldots,\pi_J$, given by $\pi_{j_1},\ldots,\pi_{j_{\alpha}}$, such that $\lambda_M$ is in $\cL(\pi_{j_1}),\ldots,\cL(\pi_{j_{\alpha}})$, then the multiplicity of $\lambda$ in $\sI$ is $\alpha$, and we have
		$$\lambda = \pi_{j_{a}}\pi_{j_{a}}\cdots \pi_{j_{a}}\lambda_M\lambda_{M+1}\cdots,$$
		for each $1\le a\le \alpha$.
	\end{enumerate}
\end{lemma}

\begin{proof}
	Since the block $\bB_M$ is nonempty, so is its prototype $\lambda_M$ and we see that $\lambda_M$ is uniquely given by one of $\pi_{J+1},\ldots,\pi_{J+K}$. Now if the successor of $\lambda_M$, namely, $\lambda_{M+1}$ is nonempty, it is also uniquely given by one of $\pi_{J+1},\ldots,\pi_{J+K}$; if $\lambda_{M+1}$ is the empty partition, then it is still uniquely determined as there is exactly one of the empty partitions $\pi_1,\ldots,\pi_J$ contained in $\cL(\lambda_M)$. Continuing this process, we find that all $\lambda_m$ with $m\ge M$ are uniquely determined.
	
	It remains to characterize the predecessor of $\lambda_M$. If $M=0$, then there is no predecessor of $\lambda_M$ and hence $\lambda$ is uniquely given by $\lambda_0\lambda_1\cdots$. If $M>0$, then by our assumption, $\lambda_{M-1}$ is one of the empty partitions $\pi_1,\ldots,\pi_J$, say $\pi_j$. Furthermore, we must have $\lambda_M\in \cL(\pi_j)$; otherwise, $\lambda_M$ cannot be the successor of $\pi_j$. Finally, we note that the empty partition predecessor of $\pi_j$ could only be $\pi_j$ itself as for $1\le j'\le J$ with $j'\ne j$, we have $\pi_j\not\in \cL(\pi_{j'})$. In other words, $\lambda_M$ is preceded by $\pi_j\pi_j\cdots \pi_j$, as required.
\end{proof}

\begin{definition}\label{def:I-subset}
	Let $1\le i\le J+K$ and $1\le j\le J$ be given indices, we denote by $\sI_{i,j}$ the following subset of $\sI=\sI(\langle\Pi,\cL\rangle,T)$:
	$$\sI_{i,j}:=\{\lambda\in\sI:\text{$\lambda_0=\pi_i$ and $\lambda$ ends with $\pi_j\pi_j\cdots$}\}.$$
	We further define
	$$\sI_{i}=\sI_{i,0}:=\{\lambda\in\sI:\lambda_0=\pi_i\}.$$
\end{definition}

An immediate consequence of Lemma \ref{le:rlpi-multi} is as follows.

\begin{corollary}
	Let $1\le i\le J+K$, $1\le j\le J$ and $1\le k,k'\le K$ be indices. Then
	\begin{enumerate}[label={\textup{(\arabic*).~}},leftmargin=*,labelsep=0cm,align=left]
		\item No partition repeats in $\sI_i$.
		
		\item No partition is contained in both $\sI_j$ and $\sI_{J+k}$, and no partition is contained in both $\sI_{J+k}$ and $\sI_{J+k'}$ whenever $k\ne k'$.
	\end{enumerate}
\end{corollary}

Now we consider the related generating functions.

\begin{definition}\label{def:I-gf}
	Assume that $\fs$ is a partition statistic such that for $\lambda\in\sI$ with the linked partition ideal decomposition $\lambda = \lambda_0\lambda_1\cdots$,
	$$\fs(\lambda)=\fs(\lambda_0)+\fs(\lambda_1)+\cdots.$$
	Define for $1\le i\le J+K$ and $1\le j\le J$,
	$$H_{i,j}(x) = H_{i,j}(x,y,q):= \sum_{\lambda\in \sI_{i,j}}x^{\sharp(\lambda)}y^{\fs(\lambda)}q^{|\lambda|}.$$
	We further define
	$$H_{i}(x)=H_{i,0}(x)=H_{i,0}(x,y,q):=\sum_{\lambda\in \sI_{i}}x^{\sharp(\lambda)}y^{\fs(\lambda)}q^{|\lambda|}.$$
\end{definition}

We shall establish the following relations satisfied by these generating functions. In particular, if the relations are listed explicitly, we are led to a system of $q$-difference equations.

\begin{theorem}\label{th:gf-sys}
	For $1\le i\le J+K$ and $0\le j\le J$,
	\begin{align}
		H_{i,j}(x) = x^{\sharp(\pi_i)}y^{\fs(\pi_i)}q^{|\pi_i|}\sum_{i':\pi_{i'}\in \cL(\pi_i)} H_{i',j}(xq^T).
	\end{align}
\end{theorem}

\begin{proof}
	Given any partition $\lambda\in \sI_{i,j}$, if we write in terms of the linked partition ideal decomposition $\lambda = \lambda_0\lambda_1\cdots$, then $\lambda_0=\pi_i$. Note that
	\begin{align*}
		\lambda &= \phi^0(\lambda_0)\oplus \phi^T(\lambda_1)\oplus \phi^{2T}(\lambda_2)\oplus \cdots \\
		&= \lambda_0 \oplus \phi^T\big(\lambda_1\oplus \phi^T(\lambda_2)\oplus\cdots\big)\\
		&= \pi_i \oplus \phi^T(\lambda'),
	\end{align*}
	where $\lambda' = \lambda_1\oplus \phi^T(\lambda_2)\oplus\cdots$. In particular, $\lambda'\in \bigsqcup_{i':\pi_{i'}\in \cL(\pi_i)}\sI_{i',j}$ as $\lambda_1\in\cL(\lambda_0)= \cL(\pi_i)$. Further,
	\begin{align*}
		\fs(\lambda) = \fs(\lambda_0)+\fs(\lambda_1)+\fs(\lambda_2)+\cdots = \fs(\pi_i)+\fs(\lambda').
	\end{align*}
	Noting that $\lambda'\in \bigsqcup_{i':\pi_{i'}\in \cL(\pi_i)}\sI_{i',j}$ bijectively correspond to $\lambda\in \sI_{i,j}$, we conclude that
	\begin{align*}
		H_{i,j}(x) &= \sum_{\lambda\in \sI_{i,j}}x^{\sharp(\lambda)}y^{\fs(\lambda)}q^{|\lambda|}\\
		&=\sum_{i':\pi_{i'}\in \cL(\pi_i)} \sum_{\lambda'\in \sI_{i',j}} x^{\sharp(\pi_i)+\sharp(\lambda')} y^{\fs(\pi_i)+\fs(\lambda')} q^{|\pi_i|+|\lambda'|+T\cdot \sharp(\lambda')}\\
		&=x^{\sharp(\pi_i)} y^{\fs(\pi_i)} q^{|\pi_i|} \sum_{i':\pi_{i'}\in \cL(\pi_i)} \sum_{\lambda'\in \sI_{i',j}} (xq^T)^{\sharp(\lambda')} y^{\fs(\lambda')} q^{|\lambda'|}\\
		&= x^{\sharp(\pi_i)}y^{\fs(\pi_i)}q^{|\pi_i|}\sum_{i':\pi_{i'}\in \cL(\pi_i)} H_{i',j}(xq^T),
	\end{align*}
	as requested.
\end{proof}

\section{Euclidean billiard partitions}\label{sec:EBP}

Following the framework in the previous section, we choose
\begin{align*}
	\Pi&=\{\pi_1,\pi_2,\pi_3,\pi_4,\pi_5\}\\
	&=\{\varnothing_E,\varnothing_O,1,2,1+2\},
\end{align*}
where both $\varnothing_E$ and $\varnothing_O$ are the empty partition while they are deliberately assumed to be different. Also, the map of linking sets is given by
\begin{equation*}
	\begin{array}{lp{0.5cm}l}
		\pi\in\Pi && \cL(\pi)\\
		\hline\rule{0pt}{1\normalbaselineskip}
		\pi_1 = \varnothing_E && \{\pi_1,\pi_3,\pi_4,\pi_5\} = \{\varnothing_E,1,2,1+2\}\\
		\pi_2 = \varnothing_O && \{\pi_2,\pi_4\} = \{\varnothing_O,2\}\\
		\pi_3 = 1 && \{\pi_2,\pi_4\} = \{\varnothing_O,2\}\\
		\pi_4 = 2 && \{\pi_1,\pi_3,\pi_4,\pi_5\} = \{\varnothing_E,1,2,1+2\}\\
		\pi_5 = 1+2 && \{\pi_1,\pi_3,\pi_4,\pi_5\} = \{\varnothing_E,1,2,1+2\}
	\end{array}
\end{equation*}
Finally, the modulus is chosen by $T=2$.

Recalling Definitions \ref{def:I-subset} and \ref{def:I-gf}, we shall consider the generating functions for $1\le i\le 5$ and $1\le j\le 2$,
$$H_{i,j}(x) := \sum_{\lambda\in \sI_{i,j}}x^{\sharp(\lambda)}y^{\sharp_{1,2}(\lambda)}q^{|\lambda|},$$
and additionally,
$$H_{i}(x)=H_{i,0}(x):=\sum_{\lambda\in \sI_{i}}x^{\sharp(\lambda)}y^{\sharp_{1,2}(\lambda)}q^{|\lambda|}.$$
By Theorem \ref{th:gf-sys}, we have the following $q$-difference system.

\begin{proposition}
	For $0\le j\le 2$,
	\begin{subnumcases}{}
		H_{1,j}(x) = H_{1,j}(xq^2)+H_{3,j}(xq^2)+H_{4,j}(xq^2)+H_{5,j}(xq^2),\label{eq:q-sys-1-1}\\
		H_{2,j}(x) = H_{2,j}(xq^2)+H_{4,j}(xq^2),\label{eq:q-sys-1-2}\\
		H_{3,j}(x) = xyq\big(H_{2,j}(xq^2)+H_{4,j}(xq^2)\big),\label{eq:q-sys-1-3}\\
		H_{4,j}(x) = xq^2\big(H_{1,j}(xq^2)+H_{3,j}(xq^2)+H_{4,j}(xq^2)+H_{5,j}(xq^2)\big),\label{eq:q-sys-1-4}\\
		H_{5,j}(x) = x^2yq^3\big(H_{1,j}(xq^2)+H_{3,j}(xq^2)+H_{4,j}(xq^2)+H_{5,j}(xq^2)\big).\label{eq:q-sys-1-5}
	\end{subnumcases}
\end{proposition}

Note that by \eqref{eq:q-sys-1-1}, \eqref{eq:q-sys-1-4} and \eqref{eq:q-sys-1-5},
\begin{align*}
	\begin{cases}
		H_{4,j}(x)=xq^2H_{1,j}(x),\\
		H_{5,j}(x)=x^2yq^3H_{1,j}(x),
	\end{cases}
\end{align*}
and that by \eqref{eq:q-sys-1-2} and \eqref{eq:q-sys-1-3},
\begin{align*}
	H_{3,j}(x)=xyqH_{2,j}(x).
\end{align*}
Therefore, the above $q$-difference system can be simplified as follows.

\begin{corollary}
	For $0\le j\le 2$,
	\begin{subnumcases}{}
		H_{1,j}(x) = (1+xq^4+x^2yq^7)H_{1,j}(xq^2)+xyq^3H_{2,j}(xq^2),\label{eq:q-sys-2-1}\\
		H_{2,j}(x) = xq^4H_{1,j}(xq^2)+H_{2,j}(xq^2).\label{eq:q-sys-2-2}
	\end{subnumcases}
\end{corollary}

Recall the conditions
\begin{itemize}[align=left,
	leftmargin=2.5em,
	itemindent=0pt,
	labelsep=0pt,
	labelwidth=2.5em]
	\item[(E1)] adjacent parts are never both odd;
	\item[(E2)] the smallest part is even.
\end{itemize}
Partitions in $\cB$ (i.e.~Euclidean billiard partitions) are partitions into distinct parts satisfying both (E1) and (E2); partitions in $\cS$ are partitions into distinct parts satisfying only (E1).

It is plain that partitions in $\sI=\sI(\langle\Pi,\cL\rangle,T)$ are partitions into distinct parts.

Now we explain why we assign the subscripts ``$E$'' and ``$O$'' in $\pi_1=\varnothing_E$ and $\pi_2=\varnothing_O$. Let $\lambda\in \sI$. Assume that in the linked partition ideal decomposition $\lambda_0\lambda_1\cdots$ of $\lambda$, there is a node, say $\lambda_M$, equals $\varnothing_E$. If at least one nonempty partition precedes $\lambda_M$, and assume that $\lambda_{M'}$ is the last nonempty partition among $\lambda_0,\ldots,\lambda_{M-1}$, then $\varnothing_E\in \cL(\lambda_{M'})$ so that $\lambda_{M'}\in\{\pi_4,\pi_5\}=\{2,1+2\}$. In other words, the largest part in $\lambda$ that precedes the block $\bB_M$ is $2M'+2$, which is even. Similarly, when $\lambda_M=\varnothing_O$, the last nonempty partition $\lambda_{M'}$ preceding $\lambda_M$, if exists, is such that $\lambda_{M'}\in\{\pi_3\}=\{1\}$ so that the largest part in $\lambda$ that precedes the block $\bB_M$ is $2M'+1$, which is odd. In conclusion, the subscript ``$E$'' or ``$O$'' records the parity of the largest part preceding an empty block in the decomposition.

Meanwhile, when $\lambda_M=\varnothing_E$, the first nonempty partition $\lambda_{M''}$ succeeding $\lambda_M$, if exists, is such that $\lambda_{M''}\in \cL(\varnothing_E)$ so that $\lambda_{M''}\in \{\pi_3,\pi_4,\pi_5\} = \{1,2,1+2\}$. When $\lambda_M=\varnothing_O$, the first nonempty partition $\lambda_{M''}$ succeeding $\lambda_M$, if exists, is such that $\lambda_{M''}\in \cL(\varnothing_O)$ so that $\lambda_{M''}\in \{\pi_4\} = \{2\}$. Therefore, in the decomposition of $\lambda$, the segments
$$\pi_3\to\varnothing\to\varnothing\to\cdots\to\varnothing\to \pi_3\qquad\text{and}\qquad \pi_3\to\varnothing\to\varnothing\to\cdots\to\varnothing\to \pi_5,$$
or equivalently in $\lambda$, the consecutive parts
$$(2M'+1)+(2M''+1) \qquad\text{and}\qquad (2M'+1)+(2M''+1)+(2M''+2)$$
are forbidden. As a consequence, for partitions in $\sI$, Condition (E1) is satisfied.

Conversely, we may naturally decompose partitions in $\cS\supset \cB$ as refined span one linked partition ideals. However, it should be emphasized that if the decomposition of $\lambda\in \cS\supset \cB$ is such that the first nonempty block has prototype $\pi_4=2$ while it is not the leading block $\bB_0$, then as suggested by Lemma \ref{le:rlpi-multi}, $\lambda$ has exactly two decompositions in $\sI$:
$$\pi_1\pi_1\cdots \pi_1 \pi_4\cdots \qquad\text{and}\qquad \pi_2\pi_2\cdots \pi_2 \pi_4\cdots.$$

\begin{example}
	\textbf{(i).}~We decompose the partition $1+2+3+8+9+10$ in $\cS$ as
	$$\pi_5\pi_3\pi_2\pi_4\pi_5\pi_1\pi_1\cdots$$
	since
	\begin{align*}
		1+2+3+8+9+10 &= \phi^0(1+2)\oplus \phi^2(1) \oplus \phi^4(\varnothing) \oplus \phi^6(2)\\
		&\quad \oplus \phi^8(1+2) \oplus \phi^{10}(\varnothing) \oplus \phi^{12}(\varnothing) \oplus \cdots.
	\end{align*}
	\textbf{(ii).}~We decompose the partition $6+7+8+11+14+15$ in $\cS$ (and also in $\cB$) as
	$$\pi_1\pi_1\pi_4\pi_5\pi_1\pi_3\pi_4\pi_3\pi_2\pi_2\cdots \qquad\text{or}\qquad \pi_2\pi_2\pi_4\pi_5\pi_1\pi_3\pi_4\pi_3\pi_2\pi_2\cdots$$
	since
	\begin{align*}
		6+7+8+11+14+15&= \phi^0(\varnothing)\oplus \phi^2(\varnothing) \oplus \phi^4(2) \oplus \phi^6(1+2) \oplus \phi^8(\varnothing)\\
		&\quad \oplus \phi^{10}(1) \oplus \phi^{12}(2) \oplus \phi^{14}(1) \oplus \phi^{16}(\varnothing) \oplus \phi^{18}(\varnothing) \oplus \cdots.
	\end{align*}
\end{example}

The above discussions can be summarized as follows.

\begin{proposition}\label{prop:ptn-set-new}
	The following statements are true:
	\begin{enumerate}[label={\textup{(\arabic*).~}},leftmargin=*,labelsep=0cm,align=left]
		\item Partitions in $\cS$ have a bijective correspondence with nonempty partitions in $\sI_{1}\sqcup \sI_{3}\sqcup \sI_{4}\sqcup \sI_{5}$. Further, partitions in $\cS$ with largest part even (resp.~odd) have a bijective correspondence with nonempty partitions in $\sI_{1,1}\sqcup \sI_{3,1}\sqcup \sI_{4,1}\sqcup \sI_{5,1}$ (resp.~$\sI_{1,2}\sqcup \sI_{3,2}\sqcup \sI_{4,2}\sqcup \sI_{5,2}$).
		
		\item Partitions in $\cB$ have a bijective correspondence with nonempty partitions in $\sI_{2}\sqcup \sI_{4}$. Further, partitions in $\cS$ with largest part even (resp.~odd) have a bijective correspondence with nonempty partitions in $\sI_{2,1}\sqcup \sI_{4,1}$ (resp.~$\sI_{2,2}\sqcup \sI_{4,2}$).
	\end{enumerate}
\end{proposition}

\section{Proof of Theorem \ref{th:gf-S}}\label{sec:gf-S}

Now we establish the generating function identities in Theorem \ref{th:gf-S}. Note that once we have confirmed \eqref{eq:gf-S} and \eqref{eq:gf-S-E}, it is immediate that
\begin{align*}
	\sum_{\lambda\in \cS_O} x^{\sharp(\lambda)} y^{\sharp_{1,2}(\lambda)}q^{|\lambda|} &= \sum_{\lambda\in \cS} x^{\sharp(\lambda)} y^{\sharp_{1,2}(\lambda)}q^{|\lambda|} - \sum_{\lambda\in \cS_E} x^{\sharp(\lambda)} y^{\sharp_{1,2}(\lambda)}q^{|\lambda|}\\
	&=\sum_{i\ge 0}\sum_{j\ge 0}\frac{x^{i+j}y^jq^{i^2+j^2+i}}{(q^2;q^2)_{i+j}}\left(\qbinom{i+1}{j}_{q^2}-\qbinom{i}{j}_{q^2}\right)\\
	&=\sum_{i\ge 0}\sum_{j\ge 1}\frac{x^{i+j}y^jq^{i^2+j^2+i}}{(q^2;q^2)_{i+j}}\cdot q^{2(i+1-j)}\qbinom{i}{j-1}_{q^2}\\
	&=\sum_{i\ge 0}\sum_{j\ge 1}\frac{x^{i+j}y^jq^{i^2+j^2+3i-2j+2}}{(q^2;q^2)_{i+j}}\qbinom{i}{j-1}_{q^2}\\
	\text{\tiny($(i,j)\mapsto (i-1,j+1)$)}&=\sum_{i\ge 1}\sum_{j\ge 0}\frac{x^{i+j}y^{j+1}q^{i^2+j^2+i-1}}{(q^2;q^2)_{i+j}}\qbinom{i-1}{j}_{q^2},
\end{align*}
thereby implying \eqref{eq:gf-S-O}. Here we make use of the following relation for $q$-binomial coefficients \cite[p.~35, (3.3.3)]{And1976}: for $(A,B)\ne (0,0)$,
\begin{align}\label{eq:q-binom-1}
	\qbinom{A}{B}_q = \qbinom{A-1}{B}_q + q^{A-B} \qbinom{A-1}{B-1}_q.
\end{align}
Now it suffices to prove \eqref{eq:gf-S} and \eqref{eq:gf-S-E}.

\subsection{Proof of (\ref{eq:gf-S})}

By Proposition \ref{prop:ptn-set-new},
\begin{align*}
	1+\sum_{\lambda\in \cS} x^{\sharp(\lambda)} y^{\sharp_{1,2}(\lambda)}q^{|\lambda|} = H_{1}(x)+ H_{3}(x) + H_{4}(x) + H_{5}(x).
\end{align*}
Recalling \eqref{eq:q-sys-1-1} further gives
\begin{align}\label{eq:S-gf-H1}
	1+\sum_{\lambda\in \cS} x^{\sharp(\lambda)} y^{\sharp_{1,2}(\lambda)}q^{|\lambda|} = H_{1}(xq^{-2}).
\end{align}
For convenience, let us write
\begin{align*}
	S(x):=H_{1}(xq^{-2}).
\end{align*}

First, it follows from \eqref{eq:q-sys-2-1} that
\begin{align*}
	H_{2}(xq^2) = x^{-1}y^{-1}q^{-3}\big(H_{1}(x)-(1+xq^4+x^2yq^7)H_{1}(xq^2)\big).
\end{align*}
Substituting the above into \eqref{eq:q-sys-2-2} yields
\begin{align*}
	q^2 H_{1}(xq^{-2})-(1+q^2+xq^4+x^2yq^5)H_{1}(x) + (1+xq^4)H_{1}(xq^2) = 0.
\end{align*}
Namely,
\begin{align}\label{eq:F-q-dif}
	q^2 S(x)-(1+q^2+xq^4+x^2yq^5)S(xq^2) + (1+xq^4)S(xq^4) = 0.
\end{align}

We then write
\begin{align*}
	S(x) := \sum_{n\ge 0}s_n x^n.
\end{align*}
Apparently,
\begin{align}
	s_0&=1,\label{eq:s0}\\
	s_1&=yq+q^2+yq^3+q^4+\cdots = \frac{q(y+q)}{1-q^2}.\label{eq:s1}
\end{align}
We may translate the $q$-difference equation \eqref{eq:F-q-dif} into a recurrence of $s_n$: for $n\ge 2$,
\begin{align}\label{eq:s-rec}
	(1-q^{2n})(1-q^{2n-2})s_n = q^{2n} (1-q^{2n-2}) s_{n-1} + yq^{2n-1} s_{n-2}.
\end{align}

Now let us define, for $n\ge 0$,
\begin{align}\label{eq:s-t}
	t_n := s_n (q^2;q^2)_n.
\end{align}
Then by \eqref{eq:s0} and \eqref{eq:s1},
\begin{align*}
	t_0&=1,\\
	t_1&=q(y+q).
\end{align*}
Further, for $n\ge 2$, \eqref{eq:s-rec} becomes
\begin{align*}
	t_n = q^{2n}t_{n-1}+yq^{2n-1}t_{n-2}.
\end{align*}
If we write
\begin{align*}
	T(x):=\sum_{n\ge 0}t_n x^n,
\end{align*}
then
\begin{align*}
	T(x)-1-xq(y+q) = xq^2\big(T(xq^2)-1\big) + x^2 y q^3 T(xq^2),
\end{align*}
that is,
\begin{align}\label{eq:T}
	T(x)-xq^2(1+xyq)T(xq^2) = 1+xyq.
\end{align}

Here we give an explicit expression of $T(x)$.
\begin{lemma}
	We have
	\begin{align}\label{eq:T-exp}
		T(x)=\sum_{i\ge 0}x^i q^{i(i+1)}(-xyq;q^2)_{i+1}.
	\end{align}
\end{lemma}

\begin{proof}
	Let us prove a truncated result: for $N\ge 1$,
	\begin{align}\label{eq:T-trun}
		T(x)-x^Nq^{N(N+1)}(-xyq;q^2)_N T(xq^{2N}) = \sum_{i=0}^{N-1} x^i q^{i(i+1)} (-xyq;q^2)_{i+1}.
	\end{align}
	We shall see that our lemma follows by letting $N\to\infty$.
	
	To show \eqref{eq:T-trun}, we induct on $N$. First, the base case $N=1$ is exactly \eqref{eq:T}. Now assume that \eqref{eq:T-trun} is valid for some $N\ge 1$. Replacing $x$ by $xq^{2N}$ in \eqref{eq:T} gives
	\begin{align*}
		T(xq^{2N})-xq^{2N+2}(1+xyq^{2N+1})T(xq^{2N+2}) = 1+xyq^{2N+1}.
	\end{align*}
	Multiplying by $x^Nq^{N(N+1)}(-xq;q^2)_N$ on both sides of the above, and then combining the resulting relation with \eqref{eq:T-trun}, we have
	\begin{align*}
		T(x)-x^{N+1}q^{(N+1)(N+2)}(-xyq;q^2)_{N+1}T(xq^{2N+2}) = \sum_{i=0}^{N} x^i q^{i(i+1)} (-xyq;q^2)_{i+1}.
	\end{align*}
	This is exactly the $N+1$ case of \eqref{eq:T-trun} and therefore the desired result holds.
\end{proof}

Finally, to deduce an explicit expression for $S(x)$, we need to reformulate $T(x)$. We require the $q$-binomial theorem \cite[p.~36, (3.3.6)]{And1976}: for $n\ge 0$,
\begin{align}\label{eq:q-binom}
	(z;q)_n = \sum_{j\ge 0} \qbinom{n}{j}_q (-1)^j q^{\binom{j}{2}}z^j.
\end{align}
By \eqref{eq:T-exp},
\begin{align*}
	T(x)&=\sum_{i\ge 0}x^i q^{i(i+1)}(-xyq;q^2)_{i+1}\\
	\text{\tiny(by \eqref{eq:q-binom})}&=\sum_{i\ge 0}x^i q^{i(i+1)}\sum_{j\ge 0}\qbinom{i+1}{j}_{q^2} q^{2\binom{j}{2}}(xyq)^j\\
	&=\sum_{i\ge 0}\sum_{j\ge 0}x^{i+j}y^jq^{i^2+j^2+i}\qbinom{i+1}{j}_{q^2}.
\end{align*}
Recall that
\begin{align*}
	S(x)&=\sum_{n\ge 0}s_n x^n\\
	\text{\tiny(by \eqref{eq:s-t})}&= \sum_{n\ge 0}\frac{t_n}{(q^2;q^2)_n} x^n.
\end{align*}
Therefore,
\begin{align*}
	S(x)=\sum_{i\ge 0}\sum_{j\ge 0}\frac{x^{i+j}y^jq^{i^2+j^2+i}}{(q^2;q^2)_{i+j}}\qbinom{i+1}{j}_{q^2},
\end{align*}
which confirms \eqref{eq:gf-S} by recalling \eqref{eq:S-gf-H1}.

\subsection{Proof of (\ref{eq:gf-S-E})}

The proof can be processed in the same way. We have
\begin{align*}
	1+\sum_{\lambda\in \cS_E} x^{\sharp(\lambda)} y^{\sharp_{1,2}(\lambda)}q^{|\lambda|} &= H_{1,1}(x)+ H_{3,1}(x) + H_{4,1}(x) + H_{5,1}(x)\\
	&=H_{1,1}(xq^{-2}).
\end{align*}
Letting
\begin{align*}
	S_E(x):=H_{1,1}(xq^{-2}),
\end{align*}
we also have
\begin{align*}
	q^2 S_E(x)-(1+q^2+xq^4+x^2yq^5)S_E(xq^2) + (1+xq^4)S_E(xq^4) = 0.
\end{align*}
If we further write
\begin{align*}
	S_E(x) := \sum_{n\ge 0}s_{E,n} x^n,
\end{align*}
then
\begin{align*}
	s_{E,0}&=1,\\
	s_{E,1}&=q^2+q^4+\cdots = \frac{q^2}{1-q^2}.
\end{align*}
Defining
\begin{align*}
	t_{E,n} := s_{E,n} (q^2;q^2)_n
\end{align*}
and writing
\begin{align*}
	T_E(x):=\sum_{n\ge 0}t_{E,n} x^n,
\end{align*}
we arrive at
\begin{align*}
	T_E(x)-xq^2(1+xyq)T_E(xq^2) = 1.
\end{align*}
It follows that
\begin{align*}
	T_E(x)-x^Nq^{N(N+1)}(-xyq;q^2)_N T_E(xq^{2N}) = \sum_{i=0}^{N-1} x^i q^{i(i+1)} (-xyq;q^2)_{i},
\end{align*}
so that
\begin{align*}
	T_E(x) &= \sum_{i\ge 0}x^i q^{i(i+1)}(-xyq;q^2)_{i}\\
	\text{\tiny(by \eqref{eq:q-binom})}&= \sum_{i\ge 0}\sum_{j\ge 0}x^{i+j}y^jq^{i^2+j^2+i}\qbinom{i}{j}_{q^2}.
\end{align*}
Thus,
\begin{align*}
	S_E(x)=\sum_{i\ge 0}\sum_{j\ge 0}\frac{x^{i+j}y^jq^{i^2+j^2+i}}{(q^2;q^2)_{i+j}}\qbinom{i}{j}_{q^2},
\end{align*}
as desired.

\section{Proof of Theorem \ref{th:gf-EB}}\label{sec:gf-EB}

For the proof of Theorem \ref{th:gf-EB}, we also note first that
\begin{align*}
	\sum_{\lambda\in \cB_O} x^{\sharp(\lambda)} y^{\sharp_{1,2}(\lambda)}q^{|\lambda|} &= \sum_{\lambda\in \cB} x^{\sharp(\lambda)} y^{\sharp_{1,2}(\lambda)}q^{|\lambda|} - \sum_{\lambda\in \cB_E} x^{\sharp(\lambda)} y^{\sharp_{1,2}(\lambda)}q^{|\lambda|}\\
	&= \sum_{i\ge 1}\sum_{j\ge 0}\frac{x^{i+j}y^jq^{i^2+j^2+i+2j}}{(q^2;q^2)_{i+j}}\left(\qbinom{i}{j}_{q^2}-\qbinom{i-1}{j}_{q^2}\right)\\
	\text{\tiny(by \eqref{eq:q-binom-1})}&=\sum_{i\ge 1}\sum_{j\ge 0}\frac{x^{i+j}y^jq^{i^2+j^2+3i}}{(q^2;q^2)_{i+j}}\qbinom{i-1}{j-1}_{q^2}.
\end{align*}
Thus, it is only necessary to establish \eqref{eq:gf-EB} and \eqref{eq:gf-EB-E}.

\subsection{Proof of (\ref{eq:gf-EB})}

We still deduce from Proposition \ref{prop:ptn-set-new} that
\begin{align*}
	1+\sum_{\lambda\in \cB} x^{\sharp(\lambda)} y^{\sharp_{1,2}(\lambda)}q^{|\lambda|} = H_{2}(x)+ H_{4}(x),
\end{align*}
so that by \eqref{eq:q-sys-1-2},
\begin{align}\label{eq:B-gf-H2}
	1+\sum_{\lambda\in \cB} x^{\sharp(\lambda)} y^{\sharp_{1,2}(\lambda)}q^{|\lambda|} = H_{2}(xq^{-2}).
\end{align}
Let us write
\begin{align*}
	B(x):=H_{2}(xq^{-2}).
\end{align*}

We then deduce from \eqref{eq:q-sys-2-2} that
\begin{align*}
	H_1(xq^2) = x^{-1}q^{-4}\big(H_2(x)-H_2(xq^2)\big).
\end{align*}
Substituting the above into \eqref{eq:q-sys-2-1} implies that
\begin{align}\label{eq:B-q-dif}
	q^2 B(x)-(1+q^2+xq^4+x^2yq^7)B(xq^2) + (1+xq^4)B(xq^4) = 0.
\end{align}

Writing
\begin{align*}
	B(x) := \sum_{n\ge 0}b_n x^n,
\end{align*}
then
\begin{align*}
	b_0&=1,\\
	b_1&=q^2+q^4+\cdots = \frac{q^2}{1-q^2},
\end{align*}
and for $n\ge 2$,
\begin{align*}
	(1-q^{2n})(1-q^{2n-2})b_n = q^{2n} (1-q^{2n-2}) b_{n-1} + yq^{2n+1} b_{n-2}.
\end{align*}

Let
\begin{align*}
	c_n:=b_n(q^2;q^2)_n
\end{align*}
and
\begin{align*}
	C(x):=\sum_{n\ge 0}c_n x^n.
\end{align*}
Then
\begin{align}
	C(x)-xq^2(1+xyq^3)C(xq^2)=1.
\end{align}
We find that
\begin{align}
	C(x)=\sum_{i\ge 0}x^iq^{i(i+1)}(-xyq^3;q^2)_i,
\end{align}
whose truncated version is
\begin{align*}
	C(x)-x^Nq^{N(N+1)}(-xyq^3;q^2)_N C(xq^{2N}) = \sum_{i=0}^{N-1} x^i q^{i(i+1)} (-xyq^3;q^2)_i,
\end{align*}
which can be easily shown by induction on $N$.

Finally, by \eqref{eq:q-binom},
\begin{align*}
	C(x)=\sum_{i\ge 0}\sum_{j\ge 0}x^{i+j}y^jq^{i^2+j^2+i+2j}\qbinom{i}{j}_{q^2},
\end{align*}
and therefore,
\begin{align*}
	B(x)=\sum_{i\ge 0}\sum_{j\ge 0}\frac{x^{i+j}y^jq^{i^2+j^2+i+2j}}{(q^2;q^2)_{i+j}}\qbinom{i}{j}_{q^2}.
\end{align*}

\subsection{Proof of (\ref{eq:gf-EB-E})}

We carry out similar arguments and get
\begin{align*}
	\sum_{\lambda\in \cB_E} x^{\sharp(\lambda)} y^{\sharp_{1,2}(\lambda)}q^{|\lambda|} &= H_{2,1}(x)+ H_{4,1}(x)\\
	&=H_{2,1}(xq^{-2}).
\end{align*}
Letting
\begin{align*}
	B_E(x):=H_{2,1}(xq^{-2}),
\end{align*}
then
\begin{align*}
	q^2 B_E(x)-(1+q^2+xq^4+x^2yq^7)B_E(xq^2) + (1+xq^4)B_E(xq^4) = 0.
\end{align*}
Now we write
\begin{align*}
	B_E(x) := \sum_{n\ge 0}b_{E,n} x^n,
\end{align*}
and note that
\begin{align*}
	b_{E,0}&=0,\\
	b_{E,1}&=q^2+q^4+\cdots = \frac{q^2}{1-q^2}.
\end{align*}
Let
\begin{align*}
	c_{E,n} := b_{E,n} (q^2;q^2)_n
\end{align*}
and
\begin{align*}
	C_E(x):=\sum_{n\ge 0}c_{E,n} x^n.
\end{align*}
Then
\begin{align*}
	C_E(x)-xq^2(1+xyq^3)C_E(xq^2)=xq^2.
\end{align*}
It follows that
\begin{align*}
	C_E(x)-x^Nq^{N(N+1)}(-xyq^3;q^2)_N C_E(xq^{2N}) = \sum_{i=1}^{N} x^i q^{i(i+1)} (-xyq^3;q^2)_{i-1},
\end{align*}
so that
\begin{align*}
	C_E(x) &= \sum_{i\ge 1}x^i q^{i(i+1)} (-xyq^3;q^2)_{i-1}\\
	\text{\tiny(by \eqref{eq:q-binom})}&= \sum_{i\ge 1}\sum_{j\ge 0} x^{i+j}y^jq^{i^2+j^2+i+2j}\qbinom{i-1}{j}_{q^2},
\end{align*}
which finally yields
\begin{align*}
	B_E(x)=\sum_{i\ge 1}\sum_{j\ge 0}\frac{x^{i+j}y^jq^{i^2+j^2+i+2j}}{(q^2;q^2)_{i+j}}\qbinom{i-1}{j}_{q^2}.
\end{align*}

\section{Conclusion}\label{sec:rmk}

The previous applications of linked partition ideas in the literature were usually restricted to partitions under conditions on the difference of neighboring parts. Our paper seems to be the first with parity conditions on adjacent parts considered. Such an analysis is made possible by subtly assigning different names to the empty partition in the linked partition ideal decomposition. More generally, we may carry out the same idea to cope with partitions under one or more conditions such as the prohibition or allowance of adjacent parts $\lambda_i$ and $\lambda_{i+1}$ with $\lambda_i\equiv a \pmod{m}$ and $\lambda_{i+1}\equiv a' \pmod{m'}$. It is expected that the advent of such refinements shall bring about more potential for the use of linked partition ideas in the investigation of generating functions for partitions.

\subsection*{Acknowledgements}

I am grateful to George Andrews for helpful discussions on an earlier version of this paper. This work was supported in part by a Killam Postdoctoral Fellowship from the Killam Trusts.

\bibliographystyle{amsplain}

\end{document}